\documentclass[reqno,12pt,a4paper]{amsart}
\usepackage[T1]{fontenc}
\usepackage[utf8]{inputenc}
\usepackage[english]{babel}
\usepackage{amsmath,amssymb,amsthm}
\usepackage{bm}
\usepackage[margin=25mm]{geometry}
\usepackage[hidelinks]{hyperref}
\newcommand{\cN}{\mathbb N}
\newcommand{\cZ}{\mathbb Z}
\newcommand{\PP}{\mathbb P}
\newcommand{\cA}{\mathcal A}
\newcommand{\cB}{\mathcal B}
\newcommand{\cS}{\mathcal S}
\newcommand{\bu}{\boldsymbol u}
\newcommand{\bv}{\boldsymbol v}

\theoremstyle{plain}
\newtheorem{theorem}{Theorem}[section]
\newtheorem{lemma}{Lemma}[section]

\theoremstyle{definition}
\newtheorem{example}{Example}[section]
\theoremstyle{remark}
\newtheorem{remark}{Remark}
\numberwithin{equation}{section}

\begin{document}

\title[A Romanoff-type theorem]{A Romanoff-type theorem for a multiset of products of powers}
\author{Artyom Radomskii}
\date{}
\begin{abstract}
Let $b_1,\dots,b_d\ge2$ be fixed integers, and let $k$ be their
multiplicative rank. We study representations
$n=a+b_1^{u_1}\cdots b_d^{u_d}$, where $a$ belongs to a set
$\cA$ of positive integers and $u_1,\dots,u_d$ are positive integers,
counting distinct tuples of exponents separately. Under density and correlation assumptions
on $\cA$, we obtain a lower bound for the number of integers with many
such representations, in terms of $d$ and $k$. In particular, when
$\cA$ is the set of primes or the set of positive integers representable
as a sum of two squares, a positive proportion of the integers $n\le x$
have at least $c_1(\log x)^{d-1}$ or $c_1(\log x)^{d-1/2}$
representations, respectively, for some $c_1>0$ and all sufficiently
large $x$. No multiplicative independence or coprimality assumptions on
the bases are required.
\end{abstract}
\address{HSE University, Moscow, Russian Federation}

\keywords{Romanoff's theorem, Euler's totient function, prime numbers, sums of two squares}

\email{artyom.radomskii@mail.ru}

\maketitle

\section{Introduction}

N.~P. Romanoff \cite{Romanoff} proved that the set of positive integers representable as the sum of a prime number and a power of a given integer $a>1$ has a positive lower asymptotic density. In this paper we obtain a generalization of his result.

Let $d\ge1$ and let $b_1,\dots,b_d\ge2$ be arbitrary fixed integers.
For $\bu=(u_1,\dots,u_d)\in\cN^d$, where $\cN=\{1,2,\dots\}$, put
\[
  \beta(\bu)=b_1^{u_1}\cdots b_d^{u_d}.
\]
We regard the family
\[
  \cB=\bigl(\beta(\bu)\bigr)_{\bu\in\cN^d}
\]
as a multiset: each tuple of exponents contributes a separate occurrence,
even when different tuples yield the same product.

For a set $\cA\subseteq\cN$, define
\[
  R_{\cA}(n)=
  \#\{(a,\bu)\in\cA\times\cN^d:a+\beta(\bu)=n\}.
\]

Let $k$ denote the multiplicative rank of the bases, that is, the maximum
number of multiplicatively independent elements among $b_1,\dots,b_d$.
Recall that $a_1,\dots,a_m$ are multiplicatively independent if
\[
  a_1^{t_1}\cdots a_m^{t_m}=1,\quad t_1,\dots,t_m\in\cZ
  \quad\Longrightarrow\quad t_1=\cdots=t_m=0.
\]
We have $1\le k\le d$.

Our main result is the following theorem.

\begin{theorem}\label{T1}
Let $\cA\subseteq\cN$ be an infinite set, and let $\eta$ be a positive function defined
for all sufficiently large real $x$. Put
\[
  \cA(x)=\#\{a\in\cA:a\le x\},\qquad
  \cA(x,h)=\#\{a\in\cA:a\le x,\ a+h\in\cA\}.
\]
Suppose that, for all sufficiently large $x$,
\begin{equation}\label{eq:A-count}
  \cA(x)\asymp\frac{x}{\eta(x)},\qquad
  \cA(x/2)\gg\cA(x)
\end{equation}
and
\begin{equation}\label{eq:A-correlation}
  \cA(x,h)\ll\frac{h}{\varphi(h)}\frac{x}{\eta(x)^2}
\end{equation}
uniformly in integers $h\ge1$. Then there exist positive constants
$c_1,c_2$, depending only on $b_1,\dots,b_d$ and the constants in
\eqref{eq:A-count} and \eqref{eq:A-correlation}, such that
\begin{equation}\label{eq:main}
  \#\left\{n\le x:
    R_{\cA}(n)\ge c_1\frac{(\log x)^d}{\eta(x)}\right\}
  \ge c_2x\,\frac{(\log x)^k}{(\log x)^k+\eta(x)}
\end{equation}
for all sufficiently large $x$.
\end{theorem}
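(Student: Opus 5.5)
We use the standard first/second moment method of Romanoff, with a truncation. Fix a small constant $\delta>0$ and set $U=\lfloor \delta\log x\rfloor$. Let $\cB_x$ be the multiset of $\beta(\bu)$ with $1\le u_i\le U/(d\log b_i)$ for each $i$, so that every $\beta(\bu)\le x^{\delta}\le x/2$ and $\#\cB_x\asymp(\log x)^d$. Consider
\[
  S_1=\sum_{n\le x}r(n),\qquad S_2=\sum_{n\le x}r(n)^2,\qquad r(n)=\#\{(a,\bu):a\in\cA,\ \beta(\bu)\in\cB_x,\ a+\beta(\bu)=n\}.
\]
Then $R_\cA(n)\ge r(n)$. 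By \eqref{eq:A-count} (using $\cA(x/2)\gg\cA(x)$), $S_1\ge \cA(x/2)\#\cB_x\gg x(\log x)^d/\eta(x)$. Cauchy--Schwarz, applied after discarding the $n$ with $r(n)<S_1/(2x)$, gives
\[
  \#\{n\le x:\ r(n)\ge c_1(\log x)^d/\eta(x)\}\ge \frac{S_1^2}{4S_2}.
\]
It therefore suffices to show
\[
  S_2\ll \frac{x(\log x)^{2d}}{\eta(x)^2}\Bigl(1+\frac{\eta(x)}{(\log x)^k}\Bigr),
\]
since this gives the lower bound $\gg x(\log x)^k/((\log x)^k+\eta(x))$ claimed in \eqref{eq:main}.

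Expanding, $S_2$ counts quadruples $(a,a',\bu,\bv)$ with $a+\beta(\bu)=a'+\beta(\bv)$. We split it into the diagonal $\beta(\bu)=\beta(\bv)$ and the rest. The diagonal contributes $\cA(x)\cdot D$, where $D=\#\{(\bu,\bv):\beta(\bu)=\beta(\bv)\}$. Since the exponent vectors giving equal products differ by an element of a lattice of rank $d-k$ (the relation lattice of $b_1,\dots,b_d$), we get $D\ll (\log x)^{d}(\log x)^{d-k}$. So the diagonal is $\ll x(\log x)^{2d-k}/\eta(x)$, which matches the second term. Off the diagonal, with $h=|\beta(\bu)-\beta(\bv)|\ge1$, \eqref{eq:A-correlation} bounds the contribution by
\[
  \ll \frac{x}{\eta(x)^2}\sum_{\bu,\bv}\frac{h}{\varphi(h)}.
\]
The task is therefore to show $\sum_{\bu,\bv:\,h\ne0} h/\varphi(h)\ll(\log x)^{2d}$, i.e. that $h/\varphi(h)$ is bounded on average over the pairs.

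This average is the main obstacle. We write $h/\varphi(h)\ll\sum_{m\mid h}\mu^2(m)/\varphi(m)$; it is enough to restrict to squarefree $m$ up to a suitable power of $\log x$, with a standard tail argument for the rest. This reduces the problem to bounding, for each squarefree $m$, the number $N(m)$ of pairs with $\beta(\bu)\equiv\beta(\bv)\pmod m$. For $m$ coprime to $b_1\cdots b_d$, the congruence says $\beta(\bu-\bv)\equiv1$ in $(\cZ/m\cZ)^*$. We fix $\bv$ and count the $\bu$ in a box landing in a fixed coset of the kernel of the homomorphism $\cZ^d\to\langle b_1,\dots,b_d\rangle\subset(\cZ/m\cZ)^*$. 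This gives
\[
  N(m)\ll(\log x)^{2d}/\ell(m)+(\text{boundary terms}),
\]
where $\ell(m)$ is the order of the subgroup generated by the $b_i$ modulo $m$. For $m$ sharing primes with the bases, the divisibility by those primes is forced by the exponents being large, so those primes contribute a bounded factor. We then need
\[
  \sum_m\frac{\mu^2(m)}{\varphi(m)\ell(m)}<\infty,
\]
which holds because $\ell(m)\ge \operatorname{ord}_m(b_1)$, and Romanoff's classical lemma gives $\sum_m 1/(m\operatorname{ord}_m(b))<\infty$. The delicate points are the boundary terms when $\ell(m)$ is comparable to the box size (which is why $m$ is truncated) and the primes dividing the $b_i$. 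With these handled, $S_2$ has the required bound and the theorem follows.
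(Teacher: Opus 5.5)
Your argument is correct in outline, but it takes a different route from the paper. The paper does no moment computation of its own. It invokes Lemma~\ref{L1} (Theorem~1.1 of the author's earlier work), which already packages the second-moment method with the multiplicity $\rho_{\cB}(x)$ built in. It then only verifies the lemma's hypotheses for the full multiset $\{\beta(\bu)\le x\}$:
\begin{itemize}
\item the simplex count $\cB(x)\asymp L^d$ with doubling, where $L=\log x$;
\item the bound $\rho_{\cB}(x)\ll L^{d-k}$, which is the same fact as your relation-lattice bound for the diagonal;
\item a residue-class condition involving only primes $p\le L^{1/2}$.
\end{itemize}
For those primes, fixing $v_2,\dots,v_d$ confines $v_1$ to one class modulo $h_p=\operatorname{ord}_p(b_1)\le p-1\le L^{1/2}$. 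So $\lambda\ll\cB(x)/h_p$ holds with no boundary term, and the only arithmetic input is $\sum_p\log p/(p\,h_p)<\infty$. You instead redo the second moment from scratch. That makes the proof self-contained and classical, but it forces you to average $h/\varphi(h)$ over all squarefree divisors $m$. You therefore need Romanoff's lemma over squarefree moduli and some control of large $m$, bookkeeping that the black box hides.

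The loose ends you flag close more simply than you suggest.
\begin{itemize}
\item For every squarefree $m$ coprime to $b_1\cdots b_d$, fixing $\bv$ and $u_2,\dots,u_d$ puts $u_1$ in a single class modulo $\operatorname{ord}_m(b_1)$. Hence $N(m)\ll L^{2d-1}\bigl(L/\operatorname{ord}_m(b_1)+1\bigr)$.
\item The ``$+1$'' terms, summed against $\mu^2(m)/\varphi(m)$ over $m\le x^{\delta}$, contribute $\ll L^{2d-1}\log x\ll L^{2d}$. So you need neither a truncation in $m$ nor the subgroup order $\ell(m)$. A cut at a power of $\log x$ would not keep $\ell(m)$ below the box size anyway, since $\ell(m)$ can be as large as $\varphi(m)$.
\item The series must be $\sum_m\mu^2(m)/\bigl(\varphi(m)\operatorname{ord}_m(b_1)\bigr)<\infty$, not the version with $1/m$. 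It follows by the same partial-summation proof, because $\sum_{m\mid N}\mu^2(m)/\varphi(m)\ll\log\log N$.
\item Primes dividing the bases cost only the constant factor $\prod_{p\mid b_1\cdots b_d}(1-1/p)^{-1}$; no argument about large exponents is needed.
\end{itemize}
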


\begin{remark}The threshold for the number of representations in \eqref{eq:main}
involves the number of bases $d$, whereas the right-hand side involves
their multiplicative rank $k$. Neither multiplicative independence nor
pairwise coprimality of the bases is assumed in the theorem.
\end{remark}

\begin{remark}
If the bases are multiplicatively independent, then $k=d$ and every
product value has multiplicity $1$. In this case, counting with the set
and with the multiset gives the same result. In particular, this holds
if the numbers $b_1,\ldots,b_d$ are pairwise coprime.
\end{remark}

Let $\PP$ denote the set of primes, and let
$\cS=\{u^2+v^2:u,v\in\cZ\}\cap\cN$. From Theorem \ref{T1} we obtain the following result.

\begin{theorem}\label{T2}
For arbitrary fixed integer bases $b_1,\dots,b_d\ge2$, put
\begin{align*}
  R_1(n)&=\#\{(p,\bu)\in\PP\times\cN^d:p+\beta(\bu)=n\},\\
  R_2(n)&=\#\{(s,\bu)\in\cS\times\cN^d:s+\beta(\bu)=n\}.
\end{align*}
Then there exist positive constants $c_1,c_2$, depending only on the
bases, such that
\begin{equation}\label{eq:prime-corollary}
  \#\{n\le x:R_1(n)\ge c_1(\log x)^{d-1}\}\ge c_2x
\end{equation}
and
\begin{equation}\label{eq:square-corollary}
  \#\{n\le x:R_2(n)\ge c_1(\log x)^{d-1/2}\}\ge c_2x
\end{equation}
for all sufficiently large $x$.
\end{theorem}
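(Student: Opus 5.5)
Theorem \ref{T2} will follow by applying Theorem \ref{T1} twice, once with $\cA=\PP$ and once with $\cA=\cS$. In each case we verify \eqref{eq:A-count} and \eqref{eq:A-correlation} for a suitable $\eta$, and then simplify the right-hand side of \eqref{eq:main} using $k\ge1$.

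For $\cA=\PP$ we take $\eta(x)=\log x$. The prime number theorem, or even Chebyshev's bounds, gives $\pi(x)\asymp x/\log x$ and $\pi(x/2)\gg\pi(x)$. For the correlation bound, $\cA(x,h)$ counts primes $p\le x$ with $p+h$ also prime. The upper bound sieve (Selberg or Brun) gives
\[
  \#\{p\le x:p+h\in\PP\}\ll\prod_{p\mid h}\Bigl(1-\frac1p\Bigr)^{-1}\frac{x}{(\log x)^2}\ll\frac{h}{\varphi(h)}\frac{x}{(\log x)^2},
\]
uniformly in $h\ge1$. For odd $h$ the count is at most $1$, so the bound holds trivially. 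Theorem \ref{T1} then yields at least $c_2x(\log x)^k/((\log x)^k+\log x)$ integers $n\le x$ with $R_1(n)\ge c_1(\log x)^{d-1}$. Since $k\ge1$, we have $(\log x)^k+\log x\le2(\log x)^k$, so the right-hand side is $\ge (c_2/2)x$. This proves \eqref{eq:prime-corollary}.

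For $\cA=\cS$ we take $\eta(x)=\sqrt{\log x}$. Landau's theorem gives $\cS(x)\sim Kx/\sqrt{\log x}$, which implies both parts of \eqref{eq:A-count}. The main obstacle is the correlation bound
\[
  \#\{s\le x:s\in\cS,\ s+h\in\cS\}\ll\frac{h}{\varphi(h)}\frac{x}{\log x}
\]
uniformly in $h$. My plan is to derive it from an upper bound sieve of dimension $1/2+1/2=1$. An integer $m$ is in $\cS$ if and only if every prime $q\equiv3\pmod4$ divides $m$ to an even power, so we sift $n\le x$ by requiring that neither $n$ nor $n+h$ is divisible exactly once by a prime $q\equiv3\pmod 4$ with $q\le z$. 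A cleaner route is to sift out $n$ with $q\mid n(n+h)$ for such $q\le z$ with $z=x^{\epsilon}$. The elements with $q^2\mid n$ for some large $q$ contribute a negligible amount, and the small $q$ can be handled by writing $n=q^{2j}n'$ and summing. For each prime $q\equiv3\pmod4$ with $q\nmid h$, this removes $2$ residue classes modulo $q$; for $q\mid h$ it removes $1$. The sieve bound then gives
\[
  x\prod_{\substack{q\le z\\ q\equiv3\ (4)}}\Bigl(1-\frac{\omega(q)}{q}\Bigr)\ll\frac{x}{\log x}\prod_{q\mid h}\Bigl(1-\frac1q\Bigr)^{-1},
\]
which is the required bound. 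Alternatively, this is a known result (Hooley, Indlekofer, or the Nair--Tenenbaum bound for $\sum r(n)r(n+h)$), and I would cite it if the paper allows.

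With this correlation bound, Theorem \ref{T1} gives $R_2(n)\ge c_1(\log x)^{d-1/2}$ for at least $c_2x(\log x)^k/((\log x)^k+\sqrt{\log x})\ge(c_2/2)x$ integers $n\le x$. This proves \eqref{eq:square-corollary}. Finally, we take the smaller of the two pairs of constants so that a single pair $c_1,c_2$ serves both statements.
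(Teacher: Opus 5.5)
You handle $\cA=\PP$ as the paper does. The gap is in the case $\cA=\cS$, at the step you yourself flag as the main obstacle. The bound $\cS(x,h)\ll\frac{h}{\varphi(h)}\frac{x}{\log x}$, uniform in all $h\ge1$, is never established.

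\begin{itemize}
\item The sieve computation you display (two classes modulo $q$ removed when $q\nmid h$, one when $q\mid h$) counts the $n\le x$ for which $n(n+h)$ has no prime factor $q\equiv3\pmod4$ with $q\le z$. That set does not contain the pairs counted by $\cS(x,h)$, because elements of $\cS$ may be divisible by $q^2$.
\item The reduction you indicate (``writing $n=q^{2j}n'$ and summing'') is where the real work lies, and none of it is done. You would have to split both $n$ and $n+h$ by their square parts $a,b$ built from small primes $q\equiv3\pmod4$, and use $\gcd(a,b)\mid h$. You would then need to check that the sum over $a,b$ of the resulting sieve bounds, each roughly $x/\operatorname{lcm}(a,b)$ times a sieve factor, still produces only the factor $h/\varphi(h)$. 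You would also have to dispose of large square parts of $n+h$ uniformly in $h$.
\item Your first variant, excluding $q\,\|\,n$ and $q\,\|\,n+h$, does contain the $\cS$-pairs. But it is a sieve with moduli $q^2$, and you do not carry it out.
\item The fallback citation fails as stated. $\sum_{n\le x}r(n)r(n+h)$ is of order $x$, so bounding the indicator of $\cS$ by $r(n)/4$ loses exactly the factor $\log x$ you need.
\end{itemize}

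The paper avoids all of this because Theorem~\ref{T1} need not be applied to $\cS$ itself. It is applied to the subset $\cS'$ of integers all of whose prime factors are $\equiv1\pmod4$. This subset still satisfies $\cS'(x)\asymp x/\sqrt{\log x}$. Its correlation count is bounded by a standard prime-moduli sieve, sifting $n(n+h)$ by $2$ and the primes $q\equiv3\pmod4$ (Halberstam--Richert). That is essentially the computation you already wrote down, which bounds a superset of the $\cS'$-pairs. Since $\cS'\subseteq\cS$, one has $R_{\cS'}(n)\le R_2(n)$ for every $n$, so the conclusion for $\cS'$ yields \eqref{eq:square-corollary} directly. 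With that substitution, and monotonicity of $R_{\cA}$ in $\cA$ in place of a correlation bound for $\cS$, your proposal becomes complete.
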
 The first assertion of Theorem \ref{T2} with $d=2$, $b_1 = 2$, and $b_2=3$ answers a question posed by Kevin Ford in private correspondence.

\begin{example}\label{ex:repeated-base}
If $b_1=b_2=2$, then $d=2$, $k=1$, and the number $2^m$, $m\ge2$,
occurs with multiplicity $m-1$. Hence
\[
  R_1(n)=\sum_{\substack{m\ge2\\n-2^m\in\PP}}(m-1).
\]
Theorem~\ref{T2} states that
\[
  \#\{n\le x:R_1(n)\ge c_1\log x\}\ge c_2x
\]
for all sufficiently large $x$. 
\end{example}

\section{Notation}

We use $X\ll Y$, $Y\gg X$, or $X=O(Y)$ to denote the estimate $|X|\leq C Y$ for some constant $C>0$, and write $X\asymp Y$ for $X \ll Y \ll X$. The notation $X\sim Y$  means that $\lim_{x\to \infty} X/Y = 1$.

We write $\cZ$ for the set of integers, $\cN$ for the set of positive
integers, $\PP$ for the set of primes, $\cS$ for the set of positive
integers representable as $u^2+v^2$ with $u,v\in\cZ$, and $\cS'$
for the set of odd positive integers representable as $u^2+v^2$ with
$u,v\in\cZ$ and $\gcd(u,v)=1$.

 We reserve the letter $p$ for primes. In particular, the sum $\sum_{p\leq K}$ should be interpreted as being over all prime numbers not exceeding $K$. We write $\varphi (n)$ for Euler's totient function (the order of the multiplicative group of reduced residue classes modulo $n$).

If $x$ is a real number, then $\lfloor x\rfloor$ denotes its integral part. By $\# A$ we denote the number of elements of a finite set $A$. All logarithms are natural.

\section{Proofs of Theorems \ref{T1} and \ref{T2}}

We need the following result.
\begin{lemma}\label{L1}
Let $\cA\subseteq\cN$ be an infinite set, and let
$\mathcal D=(d_n)_{n=1}^{\infty}$ be a sequence of positive integers
(not necessarily distinct). Let $x_0\ge2$, and let
$\eta:[x_0,\infty)\to(0,\infty)$ be a function. Suppose that
\[
\textup{ord}_{\mathcal{D}}(v)=\#\{i\in \mathbb{N}: d_i=v\}<\infty
\]for all $v\in \mathbb{N}$. Put
\[\mathcal{D}(x)=\#\{n\in \cN: d_n\le x\}.
\]
Assume that \eqref{eq:A-count} and \eqref{eq:A-correlation} hold for
every real $x\ge x_0$, with \eqref{eq:A-correlation} uniform in integers
$h\ge1$. Suppose also that, for every real $x\ge x_0$,
\begin{equation}
\mathcal{D}(x/2)\gg \mathcal{D}(x)\label{T1:Basic.2}
\end{equation}and
\begin{equation}\label{T1:Basic.3}
\sum_{\substack{j\in \mathbb{N}:\\ d_{j}\leq x}} \sum_{p\leq (\log x)^{\alpha}}\frac{\lambda_{\mathcal{D}} (x;j,p)\log p}{p} \ll \mathcal{D}(x)^{2}
\end{equation}for some $\alpha\in (0,1)$, where
\[\lambda_{\mathcal{D}}(x;j, p) = \#\{k\in \mathbb{N}: d_k \leq x\text{ and } d_k\equiv d_j\textup{ (mod $p$)}\}.
\]Set
\[
R(n)=\#\{(a,i)\in \cA\times\cN: a+d_i = n\}\quad\text{and}\quad \rho_{\mathcal{D}}(x)=\max_{1\leq v\leq x} \textup{ord}_{\mathcal{D}}(v).
\]Then there exist positive constants $\gamma_1$ and $\gamma_2$ depending only on $\alpha$ and the constants implied by the symbols $\ll$, $\gg$, and $\asymp$ in \eqref{eq:A-count}, \eqref{eq:A-correlation}, \eqref{T1:Basic.2}, and \eqref{T1:Basic.3} such that
\[
\#\Big\{n\leq x: R(n)\geq \gamma_1 \frac{\mathcal{D}(x)}{\eta(x)}\Big\}\geq \gamma_2 x \frac{\mathcal{D}(x)}{\mathcal{D}(x)+ \rho_{\mathcal{D}} (x)\eta(x)}
\]for all $x \ge x_0$.
\end{lemma}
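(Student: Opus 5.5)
We use the standard Romanoff second-moment method: a lower bound for the first moment of $R$ restricted to $n\le x$, an upper bound for the second moment, and then Cauchy--Schwarz on the set where $R$ is large.

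\textbf{First moment.} Consider
\[
S_1=\sum_{n\le x}R(n)\ \ge\ \#\{(a,i): a\le x/2,\ d_i\le x/2\}=\cA(x/2)\,\mathcal D(x/2).
\]
By \eqref{eq:A-count} and \eqref{T1:Basic.2}, this gives $S_1\gg x\mathcal D(x)/\eta(x)$.

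\textbf{Second moment.} Expanding the square,
\[
S_2=\sum_{n\le x}R(n)^2\le \sum_{d_i,d_j\le x}\#\{a\le x: a\in\cA,\ a+d_i-d_j\in\cA\}.
\]
We split according to whether $d_i=d_j$.

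The diagonal pairs $d_i=d_j$ contribute at most $\cA(x)\sum_{j:d_j\le x}\mathrm{ord}_{\mathcal D}(d_j)\le \rho_{\mathcal D}(x)\mathcal D(x)\,x/\eta(x)$.

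For the off-diagonal pairs, set $h=|d_i-d_j|\ge1$ and apply \eqref{eq:A-correlation} (symmetrized in sign). This bounds their contribution by
\[
\ll \frac{x}{\eta(x)^2}\sum_{d_i\ne d_j\le x}\frac{h}{\varphi(h)}.
\]
We expand
\[
\frac{h}{\varphi(h)}\ll \sum_{p\mid h}\frac{\log p}{p}+O(1)\quad\text{(in an exponentiated form).}
\]
This is the main obstacle. The clean route is the classical bound
\[
\frac{h}{\varphi(h)}\ll \exp\Big(\sum_{p\mid h,\ p\le(\log x)^\alpha}\frac1p\Big),
\]
valid since $h\le x$: primes $p>(\log x)^\alpha$ dividing $h$ number $\ll\log x/(\alpha\log\log x)$, so they contribute $O(1)$ to the exponent. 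However, \eqref{T1:Basic.3} is linear in $\sum \log p/p$, not exponential, so I would instead use the identity
\[
\frac{h}{\varphi(h)}=\sum_{m\mid h}\frac{\mu^2(m)}{\varphi(m)},
\]
truncated to $m$ composed of small primes. A cheaper alternative is
\[
\frac{h}{\varphi(h)}\ll 1+\sum_{p\mid h,\ p\le(\log x)^\alpha}\frac{\log p}{p}\cdot C,
\]
which holds because $\log(h/\varphi(h))\le\sum_{p\mid h}1/p$ is bounded unless the small primes contribute. I expect to need a convexity or dyadic argument here, and I would verify the precise form that matches hypothesis \eqref{T1:Basic.3}, since the author clearly designed that hypothesis for exactly this step. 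Granting it, the sum over pairs becomes
\[
\mathcal D(x)^2+\sum_j\sum_{p\le(\log x)^\alpha}\frac{\log p}{p}\,\lambda_{\mathcal D}(x;j,p)\ll \mathcal D(x)^2 .
\]
Altogether,
\[
S_2\ll \frac{x\,\mathcal D(x)}{\eta(x)^2}\bigl(\mathcal D(x)+\rho_{\mathcal D}(x)\eta(x)\bigr).
\]

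\textbf{Conclusion.} Let $N=\#\{n\le x:R(n)\ge \gamma_1\mathcal D(x)/\eta(x)\}$, with $\gamma_1$ equal to half the implied constant in the bound for $S_1$. Then the $n\le x$ with $R(n)<\gamma_1\mathcal D(x)/\eta(x)$ contribute at most $S_1/2$ to $S_1$, so the remaining $n$ carry at least $S_1/2$. By Cauchy--Schwarz, $(S_1/2)^2\le N S_2$, hence
\[
N\gg \frac{x^2\mathcal D(x)^2/\eta(x)^2}{x\mathcal D(x)(\mathcal D(x)+\rho_{\mathcal D}(x)\eta(x))/\eta(x)^2}=\frac{x\,\mathcal D(x)}{\mathcal D(x)+\rho_{\mathcal D}(x)\eta(x)},
\]
which is the claimed bound.
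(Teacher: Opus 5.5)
Your overall scheme is sound, and everything outside one step checks out. The first moment gives $S_1\ge\cA(x/2)\mathcal D(x/2)\gg x\mathcal D(x)/\eta(x)$. The diagonal pairs contribute at most $\rho_{\mathcal D}(x)\mathcal D(x)\cA(x)$. The sign symmetrization reduces the off-diagonal terms to $\cA(x,h)$ with $h=|d_i-d_j|\le x$, and the Cauchy--Schwarz conclusion is correct. (The paper gives no proof of this lemma, only a reference to the author's earlier work.)

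The gap is the one non-routine step, the only place where \eqref{T1:Basic.3} enters. You need the pointwise bound
\[
\frac{h}{\varphi(h)}\ll_\alpha 1+\sum_{p\mid h,\ p\le(\log x)^\alpha}\frac{\log p}{p}\qquad(1\le h\le x),
\]
and you explicitly grant it. None of the justifications you sketch establishes it:
\begin{itemize}
\item Counting the prime factors $p>(\log x)^\alpha$ of $h$ only bounds their contribution to $\sum_{p\mid h}1/p$ by $(\log x)^{1-\alpha}/(\alpha\log\log x)$, which is unbounded for $\alpha<1$.
\item The inequality $\log(h/\varphi(h))\le\sum_{p\mid h}1/p$ goes the wrong way; it holds only up to an additive $O(1)$.
\item The expansion $\sum_{m\mid h}\mu^2(m)/\varphi(m)$ would require distribution of the $d_i$ modulo composite $m$, which \eqref{T1:Basic.3} does not give.
\item ``Bounded unless the small primes contribute'' does not explain why an exponential in $\sum 1/p$ is dominated by a quantity linear in $\sum\log p/p$, and that is exactly the point to be proved.
\end{itemize}

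The bound is true, and the fix is short. Put $y=(\log x)^\alpha$, $S'=\sum_{p\mid h,\,p\le y}\frac{\log p}{p}$ and $z=\exp(\max(1,S'))$, and use $h/\varphi(h)\ll\exp\bigl(\sum_{p\mid h}1/p\bigr)$. Split the primes dividing $h$ into four ranges:
\begin{itemize}
\item those $\le z$ contribute at most $\log\log z+O(1)=\log\max(1,S')+O(1)$, by Mertens;
\item those in $(z,y]$ contribute at most $S'/\log z\le 1$;
\item those in $(y,\log x]$ contribute at most $\log(1/\alpha)+O(1)$, by Mertens;
\item since $h\le x$, there are at most $\log x/\log\log x$ prime factors $>\log x$, contributing at most $1/\log\log x$.
\end{itemize}
Hence $\sum_{p\mid h}1/p\le\log(1+S')+\log(1/\alpha)+O(1)$, that is, $h/\varphi(h)\ll\alpha^{-1}(1+S')$; bounded $x$ is trivial. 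Now sum over pairs with $d_i\ne d_j$ and swap the order of summation. This gives exactly $\mathcal D(x)^2+\sum_j\sum_{p\le y}\frac{\log p}{p}\lambda_{\mathcal D}(x;j,p)\ll\mathcal D(x)^2$, and the rest of your argument goes through unchanged.
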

\begin{proof}
This is Theorem~1.1 in \cite{Radomskii}; see also Theorem~1.6 in \cite{Radomskii.Izv} for the case $\cA=\PP$.
\end{proof}

\begin{proof}[Proof of Theorem~\ref{T1}]
We apply Lemma \ref{L1}, which allows repetitions in the
sequence $\cB$ and takes their maximum multiplicity into account.
The tuples in $\cN^d$ can be enumerated to obtain a sequence with all the
prescribed repetitions. All the counting functions defined below are
finite, since every base is greater than $1$.

Put $L=\log x$. The counting function, with multiplicities included, is
\[
  \cB(x)=\#\{\bu\in\cN^d:\beta(\bu)\le x\}
  =\#\left\{\bu\in\cN^d:
       \sum_{i=1}^d u_i\log b_i\le L\right\}.
\]
Let us show that
\begin{equation}\label{eq:B-count}
  \cB(x)=\frac{L^d}{d!\prod_{i=1}^d\log b_i}
  +O_{b_1,\dots,b_d}(L^{d-1}).
\end{equation} Put
\[
\ell_i=\log b_i>0,\qquad
H=\ell_1+\cdots+\ell_d.
\]
Then
\[
\cB(x)=\#\left\{\bm{u}\in\cN^d:
                    \sum_{i=1}^d\ell_i u_i\le L\right\}.
\]
For $T\ge0$, consider the simplex
\[
\Delta_T=\left\{\bm{t}\in[0,\infty)^d:
                    \sum_{i=1}^d\ell_i t_i\le T\right\}.
\]Recall that the standard simplex
$\{\bm{\xi}\in[0,\infty)^d:
\sum_{i=1}^d\xi_i\le1\}$ has volume $1/d!$.
Hence, by scaling and the change of variables
$y_i=\ell_i t_i$, we obtain
\[
\operatorname{vol}(\Delta_T)
=\frac{T^d}{d!\,\ell_1\cdots\ell_d}.
\]
To each tuple $\bm{u}\in\cN^d$ counted by $\cB(x)$, associate the unit cube
\[
Q_{\bm{u}}=\prod_{i=1}^d[u_i-1,u_i).
\]
These cubes are pairwise disjoint, so the volume of their union equals
$\cB(x)$. Moreover, for $L\ge H$,
\[
\Delta_{L-H}
\subseteq
\bigcup_{\substack{\bm{u}\in\cN^d\\\sum_{i=1}^d\ell_i u_i\le L}}
Q_{\bm{u}}
\subseteq\Delta_L.
\]
The right inclusion follows from $t_i<u_i$ whenever
$\bm{t}\in Q_{\bm{u}}$. To prove the left inclusion, take
$\bm{t}\in\Delta_{L-H}$ and put $u_i=\lfloor t_i\rfloor+1$.
Then $\bm{t}\in Q_{\bm{u}}$ and
\[
\sum_{i=1}^d\ell_i u_i
\le\sum_{i=1}^d\ell_i t_i+H
\le L.
\]
Comparing volumes, we obtain
\[
\frac{(L-H)^d}{d!\,\ell_1\cdots\ell_d}
\le\cB(x)
\le\frac{L^d}{d!\,\ell_1\cdots\ell_d}.
\]
Since $H$ depends only on the fixed bases,
\[
L^d-(L-H)^d=O_{b_1,\dots,b_d}(L^{d-1}).
\]
The two bounds therefore imply \eqref{eq:B-count}.

The asymptotic formula \eqref{eq:B-count} yields
\begin{equation}\label{eq:B-doubling}
  \cB(x)\asymp L^d,\qquad \cB(x/2)\gg \cB(x).
\end{equation} Thus, \eqref{T1:Basic.2} holds.

We estimate the maximum multiplicity
\[
  \rho_{\cB}(x)=\max_{1\le m\le x}
  \#\{\bu\in\cN^d:\beta(\bu)=m\}.
\]
After relabelling the bases, we may assume that $b_1,\dots,b_k$ are
multiplicatively independent. For fixed $u_{k+1},\dots,u_d$, the equation
\[
  b_1^{u_1}\cdots b_k^{u_k}
  =\frac{m}{b_{k+1}^{u_{k+1}}\cdots b_d^{u_d}}
\]
has at most one solution in $u_1,\dots,u_k$.
If $m\le x$, each of the remaining exponents is at most
$L/\log b_i$. Hence
\begin{equation}\label{eq:rho}
  \rho_{\cB}(x)\ll L^{d-k}.
\end{equation}
If $k=d$, there are no free exponents, and every value has multiplicity
$1$.

We now verify the condition on the distribution in residue classes.
For a tuple $\bu$ with $\beta(\bu)\le x$, put
\[
  \lambda(x;\bu,p)=
  \#\{\bv\in\cN^d:\beta(\bv)\le x,
     \ \beta(\bv)\equiv\beta(\bu)\pmod p\}.
\]
Write $D=b_1\cdots b_d$. For primes $p\mid D$, we use the bound
\begin{equation}\label{eq:bad-primes}
  \lambda(x;\bu,p)\le \cB(x).
\end{equation}

For a prime $p\nmid b_1$, let $h_p$ denote the multiplicative
order of $b_1$ modulo $p$, which is to say that $h_{p}$ is the least positive integer $h$ such that $b_1^{h}\equiv 1$ (mod $p$). By Fermat's theorem, $b_1^{p-1}\equiv 1$ (mod $p$), and hence $h_{p}$ exists and $1\leq h_{p} \leq p-1$.

Now suppose that $p\nmid D$ and $p\le L^{1/2}$.
For fixed $v_2,\dots,v_d$, the congruence
\[
  b_1^{v_1}\cdots b_d^{v_d}\equiv\beta(\bu)\pmod p
\]
either has no solutions or determines a single residue class for $v_1$
modulo $h_p$. Each exponent $v_i$ is bounded above by $L/\log b_i$.
Consequently,
\[
  \lambda(x;\bu,p)
  \ll L^{d-1}\left(\frac{L}{h_p}+1\right).
\]
Since
$h_p\le p-1\le L^{1/2}$, we obtain
\begin{equation}\label{eq:lambda}
  \lambda(x;\bu,p)\ll\frac{L^d}{h_p}
  \ll\frac{\cB(x)}{h_p}.
\end{equation}
All these estimates are uniform in the tuples $\bu$ and primes $p$
under consideration.

By \cite[Lemma~4.2]{Radomskii}, applied with base $b_1$ and
$\varepsilon=1$,
\begin{equation}\label{eq:order-series}
  \sum_{p\nmid b_1}\frac{\log p}{p\,h_p}<\infty.
\end{equation}
It follows from \eqref{eq:bad-primes}, \eqref{eq:lambda}, and
\eqref{eq:order-series} that
\begin{align*}
  \sum_{p\le L^{1/2}}\frac{\lambda(x;\bu,p)\log p}{p}
  &\ll \cB(x)\left(
      \sum_{p\mid D}\frac{\log p}{p}
      +\sum_{p\nmid D}\frac{\log p}{p\,h_p}\right)\\
  &\ll \cB(x).
\end{align*}
Summing over all tuples $\bu$ with $\beta(\bu)\le x$, we obtain
\begin{equation}\label{eq:residue-condition}
  \sum_{\substack{\bu\in\cN^d\\\beta(\bu)\le x}}
  \sum_{p\le(\log x)^{1/2}}
  \frac{\lambda(x;\bu,p)\log p}{p}
  \ll \cB(x)^2.
\end{equation}

In view of \eqref{eq:B-doubling} and
\eqref{eq:residue-condition}, all the hypotheses
of Lemma~\ref{L1} hold with $\alpha=1/2$. This lemma yields positive constants
$\gamma_1,\gamma_2$ such that
\begin{equation}\label{eq:criterion}
  \#\left\{n\le x:
    R_{\cA}(n)\ge\gamma_1\frac{\cB(x)}{\eta(x)}\right\}
  \ge\gamma_2x\,\frac{\cB(x)}{\cB(x)+\rho_{\cB}(x)\eta(x)}.
\end{equation}
By \eqref{eq:B-count} and \eqref{eq:rho},
\[
  \frac{\cB(x)}{\cB(x)+\rho_{\cB}(x)\eta(x)}
  \gg\frac{L^d}{L^d+L^{d-k}\eta(x)}
  =\frac{L^k}{L^k+\eta(x)}.
\]
Together with $\cB(x)\asymp L^d$ and \eqref{eq:criterion}, this proves
\eqref{eq:main} after adjusting the positive constants. Theorem \ref{T1} is proved.
\end{proof}

\begin{proof}[Proof of Theorem~\ref{T2}]
For $\cA=\PP$, conditions \eqref{eq:A-count} and
\eqref{eq:A-correlation} hold with $\eta(x)=\log x$, by the prime number
theorem and the standard sieve upper bound for pairs of primes
\cite[Corollary~2.4.1]{HalberstamRichert}.
Since $k\ge1$, for large $x$ we have
\[
  \frac{(\log x)^k}{(\log x)^k+\log x}\ge\frac12.
\]
Applying Theorem~\ref{T1} gives \eqref{eq:prime-corollary}.

For the second assertion, consider the set $\cS'$ of odd positive integers
representable as $u^2+v^2$ with $u,v\in\cZ$ and $\gcd(u,v)=1$.
This is precisely the set of positive integers all of whose prime
divisors are congruent to $1$ modulo $4$, including the integer $1$.

The standard asymptotic formula
(see \cite[Theorem~14.2]{Friedlander.Iwaniec}, with the leading
constant corrected) is
\[
\mathcal{S}'(x)\sim  \frac{cx}{\sqrt{\log x}},
\]where
\[
c=\frac{1}{2\sqrt{2}}\prod_{p \equiv 3\textup{\,(mod $4$)}} \Big(1 - \frac{1}{p^{2}}\Big)^{1/2}.
\]
The value of $c$ can be checked using the corresponding Euler
product.\footnote{For real $s>1$, put
\[
F(s)=\sum_{n\in\cS'}n^{-s}
    =\prod_{p\equiv1\pmod4}(1-p^{-s})^{-1}.
\]
If $\chi_4$ is the nonprincipal Dirichlet character modulo $4$, then
\[
F(s)^2=(1-2^{-s})\zeta(s)L(s,\chi_4)
       \prod_{p\equiv3\pmod4}(1-p^{-2s}).
\]
Since $L(1,\chi_4)=\pi/4$ and $\Gamma(1/2)=\sqrt\pi$, the leading
coefficient in the asymptotic formula is
\[
c=\frac{1}{\sqrt\pi}\lim_{s\to1^+}(s-1)^{1/2}F(s)
 =\frac{1}{2\sqrt2}
   \prod_{p\equiv3\pmod4}(1-p^{-2})^{1/2}.
\]
}
Moreover, by \cite[Corollary~2.3.4]{HalberstamRichert}, uniformly
in integers $h\ge1$,
\[
\mathcal{S}' (x, h) \ll \prod_{\substack{p|h\\ p \equiv 3 \textup{\,(mod $4$)}}} \Big(1 - \frac{1}{p}\Big)^{-1} \frac{x}{\log x} \leq \frac{h}{\varphi (h)} \frac{x}{\log x},
\]where the constant implied by the $\ll$-symbol is absolute. We see that the set $\mathcal{A}=\mathcal{S}'$ satisfies \eqref{eq:A-count} and \eqref{eq:A-correlation} with $\eta(x)=(\log x)^{1/2}$.

We put
\[
R_2^{\prime}(n)=\#\{(s,\bu)\in\cS^{\prime}\times\cN^d:s+\beta(\bu)=n\}.
\]Apply Theorem~\ref{T1} with $\cA=\cS'$ and
$\eta(x)=\sqrt{\log x}$. This theorem yields positive constants
$c_1,c_2$, depending only on the bases $b_1,\ldots,b_d$, such that
\begin{equation}\label{FINAL}
  \#\left\{n\le x:
     R_2^{\prime}(n)\ge c_1\,(\log x)^{d-1/2}\right\}
  \ge c_2 x\,\frac{(\log x)^k}{(\log x)^k+\sqrt{\log x}}
\end{equation}
for all sufficiently large $x$. Since $k\ge1$,
\[
  \frac{(\log x)^k}{(\log x)^k+\sqrt{\log x}}\ge\frac12
\]
for large $x$. Since $\cS^{\prime}\subseteq \cS$, we have
$R_2^{\prime}(n)\le R_2(n)$ for all positive integers $n$.
Therefore \eqref{FINAL} yields \eqref{eq:square-corollary},
after adjusting the positive constants. This completes the proof of Theorem \ref{T2}.
\end{proof}

\section{Acknowledgements}

This work is an output of a research project (HSE-BR-2025-024) implemented as part of the Basic Research Program at HSE University.

\end{document}